\newtheorem{remark}[theorem]{Remark}
\newtheorem{example}[theorem]{Example}
\begin{document}



\bibliographystyle{plain}
\title{
A note on a generalization of eigenvector centrality for bipartite
graphs and applications }

\author{
Peteris Daugulis\thanks{Department of Mathematics, Daugavpils
University, Daugavpils, Parades 1, Latvia
(peteris.daugulis@du.lv).}
}

\pagestyle{myheadings} \markboth{P.Daugulis}{A note on
generalization of eigenvector centrality for bipartite graphs and
applications} \maketitle

\begin{abstract}
Eigenvector centrality is a linear algebra based graph invariant
used in various rating systems such as webpage ratings for search
engines. A generalization of the eigenvector centrality invariant
is defined which is motivated by the need to design rating systems
for bipartite graph models of time-sensitive and other processes.
The linear algebra connection and some applications are described.
\end{abstract}

\begin{keywords}
Eigenvector, eigenvector centrality, weighted bipartite graph,
positive weight relation (PWR), normed eigenvector centrality
sequence (NECS), normed eigenvector bicentrality sequence
(NEBS).\color{black}
\end{keywords}

\section{Introduction} \label{intro-sec}

Eigenanalysis of graph adjacency matrices is \color{black} used in
network analysis and various heuristic rating systems such as
Google PageRank \cite{1} and Eigenfactor \cite{2}.

\begin{definition}
{\rm Let $\Gamma$ be an oriented weighted graph with the vertex
set $\{1,...,n\}$ and adjacency matrix $\textbf{A}=[a_{ij}]$
($a_{ij}\neq 0$ meaning there is an oriented edge of weight
$a_{ij}$ from $j$ to $i$).  We say that
$\textbf{c}=\left[c_{1}|...|c_{n}\right]^{T}\in
(\mathbb{R}^{+})^{n}$ is a \sl normed eigenvector centrality
sequence (NECS)\rm\ of $\Gamma$ provided $\textbf{c}$ is an
Euclidean-normed eigenvector for $\textbf{A}$ with a positive
eigenvalue.
 }
\end{definition}

The usefulness of the NECS invariant can be explained by
interpreting the eigenvector condition

$$
\label{11} \textbf{c}=\lambda \textbf{Ac},\
\lambda>0,\color{black}
$$
as a system of linear equations

\begin{eqnarray*}
\label{12}
\left\{%
\begin{array}{ll}
    c_{1}=\lambda \sum\limits_{j=1}^{n}a_{1j}c_{j} \\
    ... \\
    c_{n}= \lambda \sum\limits_{j=1}^{n}a_{nj}c_{j}\\
\end{array}%
\right.
\end{eqnarray*}
where the "rating" $c_{i}$ of the vertex $i$ is proportional with
the coefficient $\lambda$ for all $i$ to the sum of the "ratings"
of all vertices "related" to $i$ - having directed edges to
 $i$.

We remind standart definitions from the matrix theory. A real
valued matrix is called positive or nonnegative provided all its
entries are positive or nonnegative. A real valued square matrix
is called irreducible provided it is
not permutation equivalent to a block triangular matrix $\left[%
\begin{array}{c|c}
  \textbf{A} & \textbf{B} \\
  \hline
  \textbf{O} & \textbf{C} \\
\end{array}%
\right]$ where $\textbf{A}$ and $\textbf{C}$ are square matrices
and $\textbf{O}$ is a zero matrix.

\color{black}
 If $\textbf{A}$ is a positive or nonnegative irreducible matrix then by Perron or Perron-Frobenius theorem,
respectively, there exists a positive eigenvalue $\lambda_{1}$
(the dominant eigenvalue) \color{black} equal to the spectral
radius $\rho(\textbf{A})$ with a $1$-di\-men\-sio\-nal eigenspace
spanned by a positive eigenvector \cite{3}. Moreover all positive
eigenvectors of $\textbf{A}$ have $\lambda_1$ as the eigenvalue.
It follows that for such matrices $\textbf{A}$ the sequence
$\textbf{c}(\textbf{A})$ exists and is unique.

Since $\lambda_{1}=\rho(\textbf{A})$ it follows that
$\textbf{c}(\textbf{A})$ can be found by the power method starting
with a positive vector:
\begin{itemize}
\item take a positive vector such as
$\textbf{c}_{0}=[1|...|1]^{T}$,

\item make a loop $\textbf{c}_{r}=\textbf{A}\cdot
\Big(\frac{1}{||\textbf{c}_{r-1}||}\textbf{c}_{r-1}\Big)$,

\item compute
$\lim\limits_{r\rightarrow\infty}\textbf{c}_{r}=\textbf{c}(\textbf{A})$.
\end{itemize}
The convergence is geometric with rate
$|\frac{\lambda_{2}}{\lambda_{1}}|$ where $\lambda_{2}$ is the
second largest eigenvalue of $\textbf{A}$.

NECS is one of the several graph vertex centrality measures such
as degree, betweenness and closeness centrality used in the
analysis of networks \cite{4}. Note that the definition of NECS
assumes that the relation corresponding to the graph is defined in
a single set - the whole set of vertices.
\section{Positive weight relations}\label{pos}
In applications it is often necessary to model systems having objects of two kinds where
a weighted relation is defined on object pairs. This leads to
defining relations in ordered pairs of sets or considering
bipartite graphs. Given two finite sets $A,B$ we call a pair
$\pi=(R,w)$ where $R\subseteq A\times B$, $w:R\rightarrow
\mathbb{R}^{+}$ a \sl positive weight relation (PWR)\rm\ between
$A$ and $B$. Given a PWR $\pi$ between sets
$A=\{a_{1},...,a_{n}\}$ and $B=\{b_{1},...,b_{m}\}$ we
define its weight \color{black} matrix $\textbf{W}=[w_{ij}]_{m,n}$ where $$w_{ij}=\left\{%
\begin{array}{ll}
    w(a_{j},b_{i}),\ if\ (a_{j},b_{i})\in R \\
    0,\ if\ (a_{j},b_{i})\not\in R.  \\
\end{array}%
\right.    $$ $\textbf{W}$ defines a weighted oriented bipartite
graph $\Gamma(\textbf{W})$ with the vertex set $A\cup B$.

For $R\subseteq A\times B$ we remind the definition of
$R^{-1}\subseteq B\times A$: $(b,a)\in R^{-1}$ if and only if
$(a,b)\in R$.\color{black}

Given a PWR $\pi=(R,w)$ and a function $\varphi:
\mathbb{R}^{+}\rightarrow \mathbb{R}^{+}$ we define a reverse
\color{black} PWR $\pi'=(R^{-1},w')$ where $w':R^{-1}\rightarrow
\mathbb{R}^{+}$, $w'(b_{i},a_{j})=\varphi(w(a_{j},b_{i}))$. We
also get the corresponding reverse weight \color{black} matrix
$\textbf{W}'=[\varphi(w(a_{i},b_{j}))]_{n,m}$. The function
$\varphi$ \color{black} is chosen depending on the application.

\begin{example} Suppose $A$ is a set of workers, $B$ is a set of
tasks, $w_{ij}$ is time in which the worker $a_{j}$ performs the
task $b_{i}$.
\end{example}

\begin{example} Suppose $A$ is a set of consumers, $B$ is a set of
products, $w_{ij}$ is the amount of the product $b_{i}$ consumed
by $a_{j}$.
\end{example}

\section{A generalization of NECS} Given a pair of related \color{black} PWR
\begin{eqnarray*}
\left\{%
\begin{array}{ll}
    \pi=(R,w), \\
    \pi'=(R^{-1},w'),\\
\end{array}%
\right.
\end{eqnarray*}
we will generalize the NECS invariant. We call the pair
$(\textbf{a},\textbf{b})$, $\textbf{a}\in (\mathbb{R}^{+})^{n}$,
$\textbf{b}\in (\mathbb{R}^{+})^{m}$, the \sl normed eigenvector
bicentrality sequence (NEBS)\rm\ associated with the pair
$(\pi,\pi')$ provided the following system holds

\begin{equation}\label{initial}
\label{1} \left\{%
\begin{array}{ll}
    \textbf{b}=\lambda \textbf{W}\textbf{a}, \\
    \textbf{a}=\mu \textbf{W}'\textbf{b}, \\
    ||\textbf{a}||=||\textbf{b}||=1,
\end{array}%
\right.
\end{equation}
with $\lambda>0$, $\mu>0$. It follows that
$(\textbf{a},\textbf{b})$ satisfies the system \begin{equation}
\label{2} \left\{%
\begin{array}{ll}
    \textbf{b}=(\lambda\mu) (\textbf{W}\textbf{W}')\textbf{b}, \\
    \textbf{a}=(\lambda\mu) (\textbf{W}'\textbf{W})\textbf{a}, \\
||\textbf{a}||=||\textbf{b}||=1.
\end{array}%
\right.
\end{equation}

\begin{theorem} \label{main-th} Let $\pi=(R,w)$ and $\pi'=(R^{-1},w')$ be defined as in section \ref{pos}.
Let $\textbf{W}$ be a positive matrix. Then a \color{black} NEBS
$(\textbf{a},\textbf{b})$ associated to $(\pi,\pi')$ exists and is
unique.

\end{theorem}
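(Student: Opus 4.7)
The plan is to reduce the coupled system (\ref{initial}) to a single application of Perron's theorem on the product matrix $\textbf{W}\textbf{W}'$ that already appears in the derived system (\ref{2}). First I would observe that since $\textbf{W}$ is entry-wise positive by hypothesis and $\varphi$ takes values in $\mathbb{R}^{+}$, the reverse weight matrix $\textbf{W}'$ is also positive. Consequently both products $\textbf{W}\textbf{W}'$ (of size $m\times m$) and $\textbf{W}'\textbf{W}$ (of size $n\times n$) are positive square matrices, so the Perron theorem recalled in Section \ref{intro-sec} applies to each of them.

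For existence, I would apply Perron's theorem to $\textbf{W}\textbf{W}'$ to obtain a unique unit-norm positive eigenvector $\textbf{b}$ whose eigenvalue $\nu=\rho(\textbf{W}\textbf{W}')$ is strictly positive. Then I would set $\tilde{\textbf{a}}=\textbf{W}'\textbf{b}$, which is positive because $\textbf{W}'$ is positive and $\textbf{b}$ is positive, put $\textbf{a}=\tilde{\textbf{a}}/\|\tilde{\textbf{a}}\|$ and $\mu=1/\|\tilde{\textbf{a}}\|$, so that the second equation of (\ref{initial}) holds by construction. For the first equation, a direct computation gives $\textbf{W}\textbf{a}=\textbf{W}\textbf{W}'\textbf{b}/\|\tilde{\textbf{a}}\|=\nu\textbf{b}/\|\tilde{\textbf{a}}\|$, so the choice $\lambda=\|\tilde{\textbf{a}}\|/\nu>0$ yields $\textbf{b}=\lambda\textbf{W}\textbf{a}$ as required.

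For uniqueness, suppose $(\textbf{a},\textbf{b})$ is any NEBS. Substituting as in the derivation of (\ref{2}) forces $\textbf{b}$ to be a positive unit eigenvector of the positive matrix $\textbf{W}\textbf{W}'$ with positive eigenvalue $1/(\lambda\mu)$. Perron's theorem then pins $\textbf{b}$ down uniquely (and its eigenvalue must coincide with $\nu$, since positive eigenvectors of a positive matrix belong only to the Perron root). Given $\textbf{b}$, the second equation of (\ref{initial}) together with $\|\textbf{a}\|=1$ determines $\textbf{a}$ and $\mu$ uniquely, and then the first equation determines $\lambda$.

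I do not anticipate a serious obstacle: the only subtle point is to verify that the factorization into $\textbf{W}\textbf{W}'$ really respects the positivity requirements of Perron (hence the importance of $\varphi$ taking positive, not merely nonnegative, values) and to remember that a positive eigenvector of a positive matrix is automatically associated with the Perron eigenvalue, which allows the passage from (\ref{2}) back to (\ref{initial}) in the uniqueness argument.
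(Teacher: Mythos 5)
Your proof is correct, and it rests on the same engine as the paper's---the Perron theorem applied to the product matrices of system (\ref{2})---but your existence argument is organized differently. The paper applies Perron--Frobenius to \emph{both} $\textbf{W}\textbf{W}'$ and $\textbf{W}'\textbf{W}$, invokes the cited fact that the nonzero eigenvalues of $\textbf{W}\textbf{W}'$ and $\textbf{W}'\textbf{W}$ coincide so that the two dominant eigenvalues agree, and then reconciles the two independently obtained eigenvectors by checking that $\textbf{W}\textbf{a}$ is a positive eigenvector of $\textbf{W}\textbf{W}'$ for the eigenvalue $\frac{1}{\lambda\mu}$, hence a positive multiple of $\textbf{b}$ (and symmetrically $\textbf{W}'\textbf{b}=\beta\textbf{a}$). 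You instead apply Perron only once, to $\textbf{W}\textbf{W}'$, and \emph{define} $\textbf{a}$ as the normalization of $\textbf{W}'\textbf{b}$; positivity of $\textbf{a}$ and both equations of (\ref{initial}) then follow by direct computation, so the equality of the two dominant eigenvalues emerges as a byproduct rather than being imported from the $AB$/$BA$ spectral lemma. Your uniqueness argument (any NEBS forces $\textbf{b}$ to be a positive eigenvector of the positive matrix $\textbf{W}\textbf{W}'$, hence the Perron vector, after which $\textbf{a}$, $\mu$ and $\lambda$ are determined) matches the paper's in substance, and you correctly isolate the one fact both versions depend on: a positive eigenvector of a positive matrix necessarily belongs to the Perron root. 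The net effect is a slightly more self-contained and constructive proof, at the cost of the $\textbf{a}$--$\textbf{b}$ symmetry that the paper's presentation makes visible.
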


\begin{proof} Since $\textbf{W}\textbf{W}'$ and $\textbf{W}'\textbf{W}$ are positive for each of the eigenvector \color{black} equations of \ref{2} there exists the
dominant positive eigenvalue and $1$-dimensional eigenspace with a
positive basis element according to the Perron-Frobenius theorem.
Moreover, the dominant eigenvalues are equal by the fact that the
sets of nonzero eigenvalues of $\textbf{W}\textbf{W}'$ and
$\textbf{W}'\textbf{W}$ are equal, see \cite{5}. If
$(\textbf{a},\textbf{b})$ is a solution of \ref{2} then
\begin{equation}
(\textbf{WW}')(\textbf{Wa})=\textbf{W}(\textbf{W}'\textbf{Wa})
=\frac{1}{\lambda\mu} (\textbf{W}\textbf{a}).
\end{equation}
It follows that $\textbf{Wa}=\alpha \textbf{b}$, $\alpha>0$, and,
by similar argument, $\textbf{W}'\textbf{b}=\beta \textbf{a}$,
$\beta>0$, $\alpha\beta\lambda\mu=1$. Thus a solution to
\ref{initial} exists and is unique.
\end{proof}

\begin{remark} It follows that the computation of NEBS can be done
using the power method and the convergence is geometric in
general.
\end{remark}

\begin{remark} The statement of the theorem \ref{main-th} is also
true by the same argument if a weaker condition for the matrices
$\textbf{W}$ and $\textbf{W}'$ holds: both matrices $\textbf{WW}'$
and $\textbf{W}'\textbf{W}$ are nonnegative and irreducible.
\end{remark}
\color{black}

\section{On the function $\varphi$} The role of $\varphi:\mathbb{R}^{+}\rightarrow
\mathbb{R}^{+}$ is to encode a suitable weight function for the
reverse relation $(R^{-1},w')$ given the initial PWR $(R,w)$.
$\varphi$ should be chosen using rigorous or heuristic
considerations related to (a) the models, (b) rating arguments and
(c) functional properties of $\varphi$ such as monotonicity,
concavity and values in specific intervals. Two models and
possible choices of $\varphi$ are given in the next section.

We now give two propositions showing the effect of $\varphi$ on
NEBS. The first propositions says that scalar multiples of a given
$\varphi$ produce the same NEBS.

\begin{theorem} \label{varphi_1} Let $\pi=(R,w)$ be a PWR with a positive weight matrix $\textbf{W}$, $R\subseteq A\times
B$. Let $\varphi_{i}:\mathbb{R}^{+}\rightarrow \mathbb{R}^{+}$ for
$i\in \{1,2\}$ be such that $\varphi_{1}=\gamma\varphi_{2}$ for
some $\gamma\in \mathbb{R}^{+}$. Define two PWR pairs
$(\pi,\pi'_{i})$ for $i\in \{1,2\}$ using $\varphi_{i}$. Then the
NEBS of $(\pi,\pi'_{1})$ and $(\pi,\pi'_{2})$ are equal.

\end{theorem}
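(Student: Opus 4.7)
The plan is to exploit the uniqueness part of Theorem \ref{main-th} and reduce the claim to a one-line rescaling argument. The key observation is that changing $\varphi_{2}$ to $\varphi_{1}=\gamma\varphi_{2}$ merely rescales the reverse weight matrix: if $\textbf{W}'_{i}$ denotes the reverse weight matrix built from $\varphi_{i}$, then $\textbf{W}'_{1}=\gamma\textbf{W}'_{2}$. Since $\gamma>0$ and $\textbf{W}$ is positive, both $\textbf{W}'_{1}$ and $\textbf{W}'_{2}$ are positive, so Theorem \ref{main-th} applies to each pair $(\pi,\pi'_{i})$ and yields a unique NEBS.

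Concretely, I would let $(\textbf{a},\textbf{b})$ be the NEBS of $(\pi,\pi'_{2})$ with positive multipliers $\lambda,\mu$ satisfying $\textbf{b}=\lambda\textbf{W}\textbf{a}$, $\textbf{a}=\mu\textbf{W}'_{2}\textbf{b}$, and $||\textbf{a}||=||\textbf{b}||=1$. Substituting $\textbf{W}'_{2}=\gamma^{-1}\textbf{W}'_{1}$ into the second equation gives $\textbf{a}=(\mu/\gamma)\textbf{W}'_{1}\textbf{b}$. Hence the same normalized vector pair $(\textbf{a},\textbf{b})$, together with the positive multipliers $(\lambda,\mu/\gamma)$, satisfies system (\ref{initial}) for the pair $(\pi,\pi'_{1})$. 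By the uniqueness assertion of Theorem \ref{main-th}, this pair must coincide with the NEBS of $(\pi,\pi'_{1})$, which is exactly the desired equality.

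There is no real obstacle here; the only subtlety worth stating explicitly is that the multipliers $\lambda$ and $\mu$ are not themselves invariants of the NEBS, but rather absorb any positive scaling of the weight matrices, while the vector pair $(\textbf{a},\textbf{b})$ is pinned down by the normalization $||\textbf{a}||=||\textbf{b}||=1$. Once this viewpoint is adopted, positivity of $\gamma$ guarantees $\mu/\gamma>0$, all hypotheses of Theorem \ref{main-th} are preserved, and the conclusion follows immediately.
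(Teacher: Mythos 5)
Your proof is correct and rests on the same one-line rescaling observation as the paper's: multiplying $\textbf{W}'$ by $\gamma>0$ changes nothing but the positive multipliers. The only cosmetic difference is that you absorb $\gamma$ into $\mu$ in system (\ref{initial}) and invoke the uniqueness clause of Theorem \ref{main-th}, whereas the paper notes that $\textbf{W}\textbf{W}'_{1}=\gamma\textbf{W}\textbf{W}'_{2}$ and $\textbf{W}'_{1}\textbf{W}=\gamma\textbf{W}'_{2}\textbf{W}$ have the same normed positive eigenvectors; both routes are equally valid.
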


\begin{proof} Let $\textbf{W}'_{i}$ be the
reverse weight matrix for the PWR pair $(\pi,\pi'_{i})$ for $i\in
\{1,2\}$. One can see that $\textbf{W}'_{1}=\gamma
\textbf{W}'_{2}$, $\textbf{WW}'_{1}=\gamma \textbf{WW}'_{2}$ and
$\textbf{W}'_{1}\textbf{W}=\gamma \textbf{W}'_{2}\textbf{W}$. It
follows that the normed positive eigenvectors for (a)
$\textbf{W}'_{1}\textbf{W}$ and $\textbf{W}'_{2}\textbf{W}$ and
(b) $\textbf{WW}'_{1}$ and $\textbf{W}\textbf{W}'_{2}$ are equal.
\end{proof}

The next proposition shows that given a weight matrix $\textbf{W}$
in a general position and an arbitrary positive normed vector
$\textbf{a}$ one can choose $\varphi$ so that $\textbf{a}$ is a
part of NEBS for the corresponding PWR pair $(\pi,\pi')$. Similar
statement can be proved for the other part of NEBS.

\begin{theorem} \label{varphi_2} Let $\textbf{W}$ be a positive $m\times n$ matrix having distinct entries.
Let $\pi=(R,w)$ be a PWR with $\textbf{W}$ as its weight matrix,
$R\subseteq A\times B$, $|A|=n$, $|B|=m$. Let $\textbf{a}\in
(\mathbb{R}^{+})^{n}$, $||\textbf{a}||=1$. Then there exists
 $\varphi:\mathbb{R}^{+}\rightarrow \mathbb{R}^{+}$ such
that the corresponding pair $(\pi,\pi')$ has $\textbf{a}$ as a
part of its NEBS $(\textbf{a},\textbf{b})$.

\end{theorem}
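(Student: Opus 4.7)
The plan is to exploit the distinctness hypothesis: if all $nm$ entries of $\mathbf{W}$ are pairwise distinct, then the prescribed values $\varphi(w_{ji})$ for $i\in\{1,\dots,n\}$, $j\in\{1,\dots,m\}$ can be chosen completely independently, so constructing $\varphi$ on this finite set reduces to exhibiting a suitable positive matrix $\mathbf{W}'$ and then extending $\varphi$ to all of $\mathbb{R}^+$ in any positive way.

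Concretely, I would first define $\mathbf{b} := \mathbf{W}\mathbf{a}/\|\mathbf{W}\mathbf{a}\|$. Because $\mathbf{W}$ is positive and $\mathbf{a}\in(\mathbb{R}^+)^n$, this $\mathbf{b}$ lies in $(\mathbb{R}^+)^m$ and has Euclidean norm $1$, and by construction it satisfies the first NEBS equation $\mathbf{b}=\lambda\mathbf{W}\mathbf{a}$ with $\lambda=1/\|\mathbf{W}\mathbf{a}\|$. The whole problem then reduces to producing a positive $n\times m$ matrix $\mathbf{M}$ (to serve as $\mathbf{W}'$) for which $\mathbf{M}\mathbf{b}$ is a positive scalar multiple of $\mathbf{a}$.

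For this I would use the simplest candidate, the rank-one matrix $\mathbf{M}=\mathbf{a}\,\mathbf{1}_m^{T}$, whose $(i,j)$-entry is just $a_i$. A direct calculation gives $\mathbf{M}\mathbf{b}=\bigl(\sum_{j}b_j\bigr)\mathbf{a}$, so the second NEBS equation $\mathbf{a}=\mu\mathbf{W}'\mathbf{b}$ holds with $\mu=1/\sum_{j}b_j>0$. Thus I only need $\mathbf{W}'=\mathbf{M}$, i.e.\ $\varphi(w_{ji})=a_i$ for every pair $(i,j)$. By the distinctness hypothesis the points $w_{ji}\in\mathbb{R}^+$ are all different, so this prescription defines $\varphi$ consistently on the $nm$-element set $\{w_{ji}\}$; extend $\varphi$ to the remainder of $\mathbb{R}^+$ by any positive function (for instance, the constant $\min_i a_i$).

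The verification that $(\mathbf{a},\mathbf{b})$ is the NEBS is then immediate: both equations of (\ref{initial}) hold with the above $\lambda,\mu$, and both $\mathbf{a}$ and $\mathbf{b}$ are unit-normed; uniqueness of NEBS (Theorem \ref{main-th}, which applies since $\mathbf{W}$ and the constructed $\mathbf{W}'$ are positive) guarantees that this is \emph{the} NEBS of the pair. The only place where the hypotheses are essential is the distinctness of the entries of $\mathbf{W}$, which is precisely what prevents the prescription $\varphi(w_{ji})=a_i$ from ever being inconsistent. This is the single substantive point of the proof; everything else is a direct check.
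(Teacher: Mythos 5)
Your proof is correct and takes essentially the same route as the paper: both arguments construct a rank-one reverse weight matrix $\textbf{W}'$ with constant rows proportional to $\textbf{a}$ (the paper takes $w'_{ij}=a_i/s$ with $s$ the sum of the entries of $\textbf{W}\textbf{a}$, you take $w'_{ij}=a_i$; these differ by a positive scalar, which is immaterial), and both use the distinctness of the entries of $\textbf{W}$ only to read off a well-defined $\varphi$ from the prescribed matrix. No gaps.
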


\begin{proof} We first show that it is possible to choose a positive matrix $\textbf{W}'$ so that
$\textbf{a}$ satisfies the matrix equation

\begin{equation}\label{3}
\textbf{a}=\textbf{W}'\textbf{W}\textbf{a}.
\end{equation}
 Let
$\textbf{W}=[w_{ij}]_{m,n}$, $\textbf{a}=[A_{1}|...|A_{n}]^{T}$
then $\textbf{Wa}=[\sum\limits_{l=1}^{n}w_{il}A_{l}]$. Let
$d_{i}=\sum\limits_{l=1}^{n}w_{il}A_{l}$,
$s=\sum\limits_{i=1}^{n}d_{i}$ and $t_{i}=A_{i}/s$. Let
$\textbf{W}'=[w'_{ij}]_{n,m}$ with $w'_{ij}=t_{i}$. One can check
that \ref{3} holds. Thus
$(\textbf{a},\frac{1}{||\textbf{Wa}||}\textbf{Wa})$ is the NEBS
for the pair $(\pi,\pi')$ constructed using $\textbf{W}$ and
$\textbf{W}'$, $\lambda=1/||\textbf{Wa}||$, $\mu=||\textbf{Wa}||$.
Define $\varphi$ by setting $w'_{ji}=\varphi(w_{ij})$ and
extending $\varphi$ arbitrarily for other values of the domain
$\mathbb{R}^{+}$. Then $\textbf{W}'$ is obtained using
$\textbf{W}$ and $\varphi$.
\end{proof}

\color{black}

\section{Applications}
\subsection{Problem solving} Let $A=\{a_{1},...,a_{m}\}$ be a set
of students and $B=\{b_{1},...,b_{n}\}$ be a set of problems.
Suppose each student has solved each problem and the solving time
has been measured. Denote by $w_{ij}$ the time in which $a_j$ has
solved $b_i$. One can pose the problem of comparing (rating)
relative difficulty of the problems and relative problem solving
speed of the students. A simple \color{black} way would be to
compare the time sums or averages $\bar{a}_{j}$ and $\bar{b}_{i}$
between objects of the same type (students or \color{black}
problems) defined as follows:
\begin{eqnarray*}
\bar{a}_{j}=\frac{1}{|B\color{black}|}\sum_{i}w_{ij},\\
\bar{b}_{i}=\frac{1}{|A\color{black}|}\sum_{j}w_{ij}.
\end{eqnarray*}
This may fail to distinguish objects as the next example shows.

\begin{example} \label{ex_1}Let $|A|=|B|=2$, $w_{11}=2$, $w_{21}=2$, $w_{12}=3$,
$w_{22}=1$. $\Gamma(\textbf{W})$ is shown in Fig.1.


\begin{eqnarray*}
\xymatrix{
a_{1}\ar[rrr]^<<<<<<<<<{2}\ar[drrr]^<<<<<<<{2}&&&b_{1}\\
a_{2}\ar[rrr]^<<<<<<<<<{1}\ar[urrr]^<<<<<<<<<{3} &&&b_{2}
 }
\end{eqnarray*}
\begin{center}
Fig.1. - $\Gamma(\textbf{W})$ for Example 5.1.
\end{center}
While we can rate the problems using time averages - $b_{1}$ has
the average solving time $\bar{b_{1}}=2.5$ and $b_{2}$ has
$\bar{b_{2}}=1.5<\bar{b_{1}}$, the students can not be rated since
they have equal time averages - $\bar{a_{1}}=\bar{a_{2}}=2$.
\end{example}

We need to refine the rating system. Let us try to define a rating
system for solvers and problems according to the following
assumptions:
\begin{itemize}
    \item the problem ratings are additive with respect to
    the solver ra\-tings and vice versa,
    \item the contribution of the $a_l$ to the rating of
    $b_i$ is proportional to the rating of $a_l$ and $w\color{black}_{il}$
    (since the harder problems will have larger solving times),
    \item the contribution the $b_k$ to the rating of
    $a_j$ is proportional to the rating of $b_k$ and
    $1/w_{kj}\color{black}$ (since the best solvers will have smaller solving times).
\end{itemize}
Define $2$ rating vectors
$\textbf{a}=[A_{1}|...|A_{m}]^{T}\color{black}$ and
$\textbf{b}=[B_{1}|...|B_{n}]^{T}\color{black}$ as follows:
\begin{eqnarray*}
\left\{%
\begin{array}{ll}
    B_{i}=\lambda\sum\limits_{l=1}^{m}w_{il}A_{l},\\
    A_{j}=\mu\sum\limits_{k=1}^{n}\Big(\frac{1}{w_{kj}}\Big)B_{k},\color{black} \\
    ||\textbf{a}||=||\textbf{b}||=1,\\
\end{array}%
\right. \iff
\left\{%
\begin{array}{ll}
    \textbf{b}=\lambda \textbf{W}\textbf{a},\\
    \textbf{a}=\mu \textbf{W}'\textbf{b},\\
    ||\textbf{a}||=||\textbf{b}||=1,\\
\end{array}%
\right.
\end{eqnarray*}
where
\begin{eqnarray*}
\left\{%
\begin{array}{ll}
    \textbf{W}=[w_{ij}]_{n,m},\\
    \textbf{W}'=[1/w_{ji}]_{m,n}.\\
\end{array}%
\right.
\end{eqnarray*}
Thus in this case $\varphi(x)=1/x$. We see that the rating
sequence $(\textbf{a},\textbf{b})$ is exactly NEBS. Since the
rating of a student should be a decreasing function of her/his
solving times we could consider choosing other decreasing
functions as $\varphi$.

\color{black}

\begin{example}In example \ref{ex_1} we have
$\textbf{W}=\left[%
\begin{array}{c|c}
  2 & 3 \\
  \hline
  2 & 1 \\
\end{array}%
\right]$, $\textbf{WW}'=\left[%
\begin{array}{c|c}
  2 & 4 \\
  \hline
  4/3 & 2 \\
\end{array}%
\right]$,

$\textbf{a}\approx[0.65|0.75]^{T}$,
$\textbf{b}\approx[0.87,0.5]^{T}$.
\end{example}

\subsection{Product and consumer ratings} Let $A=\{a_{1},...,a_{m}\}$ be a set
of consumers and $B=\{b_{1},...,b_{n}\}$ be a set of products.
Suppose each consumer has bought (consumed) all products. Denote
by $w_{ij}$ the amount of $b_i$ bought by $a_j$. One can pose the
problem of comparing (rating) relative desirability of the
products and relative consumption rate of the consumers. Let us
define a rating system for consumers and products according to the
following assumptions:
\begin{itemize}
    \item the product ratings are additive with respect to
    the consumer ra\-tings and vice versa,
    \item the contribution of $a_l$ to the rating of
    $b_i$ is proportional to the rating of $a_l$ and $w_{il}$
    (since the consumed amounts of better products will be larger),
    \item the contribution $b_k$ to the rating of
    $a_j$ is proportional to the rating of $b_k$ and
    $w_{kj}$ (since the best consumers will have larger amounts of consumed products).
\end{itemize}
\color{black} In this model we have $\varphi(x)=x$.  We can
compute the NEBS with $\textbf{W}=[w_{ij}]_{n,m}$ and
$\textbf{W}'=\textbf{W}^{T}$.

\begin{remark}
The NEBS still fails to provide different ratings in special
cases. This happens \color{black} if $\textbf{W}$ is positive and
any of the matrices $\textbf{W}\textbf{W}'$ or
$\textbf{W}'\textbf{W}$ has equal row sums, e.g. it is a Latin
square. To explain this suppose $\textbf{W}\textbf{W}'$ has equal
row sums. Then the constant vector $[1|...|1]^{T}$ is an
eigenvector for $\textbf{W}\textbf{W}'$ with a positive
eigenvalue. Since the positive vector $\textbf{b}$ satisfies the
eigenvector condition \ref{2}
$$\textbf{b}=(\lambda\mu)
(\textbf{W}\textbf{W}')\textbf{b}$$ and all positive eigenvectors
of $\textbf{WW}'$ belong to the dominant eigenvalue it follows
that $\textbf{b}$ is a multiple of  $[1|...|1]^{T}$ and thus all
elements of $B$ have equal ratings. \color{black}
\end{remark}

\section{Conclusion}
We have generalized the eigenvector centrality sequence for
bipartite graphs by defining a pair of related sequences. The
generalization is motivated by the need to define a rating system
for two sets of objects involved in a single process such as
problem solving which would be subtler than lists of average
incoming or outgoing weights. The new invariants are computed as
positive normed eigenvectors for the dominant eigenvalues of
certain matrices, they exist and are unique if the matrices
involved satisfy conditions of the Perron theorem. Further
research may be done (a) to explore and implement possible
practical applications, (b) to explore different choices of
$\varphi$ in these applications, (c) to find sufficient conditions
on $W$\color{black} and/or $\varphi$ for which NEBS exists and is
unique.

\newpage


\end{document}